\definecolor{hot}{RGB}{65,105,225}
\theoremstyle{plain}
\newtheorem{theorem}{Theorem}[section]
\newtheorem{lemma}[theorem]{Lemma}
\newtheorem{prop-def}[theorem]{Proposition-Definition}
\newtheorem{corollary}[theorem]{Corollary}
\theoremstyle{definition}
\newtheorem{definition}[theorem]{Definition}
\newtheorem{example}[theorem]{Example}
\newtheorem{question}[theorem]{Question}
\theoremstyle{remark}
\newtheorem{remark}[theorem]{Remark}
\numberwithin{equation}{section}
\def\Mor{\mathrm{Mor}}
\def\Spec{\mathrm{Spec}\,}
\def\Spf{\mathrm{Spf}}
\title{Curve selection lemma in arc spaces}  
\author{Nguyen Hong Duc}
\address{$^{\dag}$TIMAS, Thang Long University, \newline \indent Nghiem Xuan Yem, Hanoi, Vietnam.} 
\email{duc.nh@thanglong.edu.vn}
\thanks{}
\begin{document}           

\begin{abstract}
We first generalize a curve selection lemma for Noetherian schemes and apply it to prove a version of Curve Selection Lemma in arc spaces, answering affirmatively a question by Reguera. Furthermore, thanks to a structure theorem of Grinberg, Kazhdan and Drinfeld, we obtain other versions of Curve Selection Lemma in arc spaces.
\end{abstract}

\maketitle                 

\section{Introduction}
Curve Selection Lemma is shown to be a very useful tool in many geometric situations in algebraic, analytic and semi-algebraic geometry. The classical version of Curve Selection Lemma was achieved by Milnor \cite{Mi68}. Let $X$ be a semi-algebraic subset in $\mathbb R^n$ and $x$ be a point in the closure $\bar{X}$ of $X$. Then there exists a Nash curve (analytic and semi-algebraic curve) 
$$\phi:[0,\varepsilon)\to \mathbb R^n$$
such that $\phi(0)=x$ and $\phi(t)\in X$ for all $t\in (0,\varepsilon)$.  
In algebraic geometry a version of Curve Selection Lemma for varieties, which can be proved by using a cutting method, is stated as follows. Let $X$ be a scheme of finite type over a field $k$. If a non-isolated point $x$ is in the Zariski closure $\bar A$ of a constructible subset $A$, then there is a non-constant morphism
$$\alpha\colon \Spec(k_x[[t]]) \to \bar A$$
sending the closed point to $x$ and the generic point to a point in $A$. If $k_x=k$ is equal to $\Bbb C$ or $\Bbb R$ the parametrization can be chosen
convergent, or algebraic.

We are interested in the study of Curve Selection Lemma in the arc spaces. The difficulty is that the arc spaces are of infinite dimension and it is widely known that a plain formulation of Curve Selection Lemma in infinite dimensional algebraic geometry as stated above is not true in genreral as the following example shows. 

Consider $A:=V\left(\left\{x_1-x_n^n\right\}_{n \in \mathbb{N}}\right)$. Let $a$ be equal to the origin. There is no morphism
$$
\alpha: \operatorname{Spec}(\mathbb{K}[[t]]) \rightarrow A
$$
such that $\alpha(0)$ is equal to the origin $a$ and such that the image of the generic point is not the origin, since otherwise the order of the formal power series $x_1(\alpha(t))$ must be finite and would be divisible by $n$ for all positive integers $n$.

The first version of Curve Selection Lemma for arc spaces, due to Reguera in \cite[Corollary 4.8]{Reg06} is of the following form. Let $X$ be an algebraic variety and let $N$ and $N^{\prime}$ two irreducible subsets of the arc space $X_\infty$ such that $\bar{N} \subsetneq N^{\prime}$. Suppose that $N$ is generically stable  (e.g.  weakly stabe in the sense of Denef-Loeser \cite{DL99}, see Definition \ref{def31}) with the residue field $K$. Then there is a finite algebraic extension $K \subset L$ and a morphism
$$
\alpha\colon \operatorname{Spec}(L[[t]]) \rightarrow X_\infty
$$
whose special point is sent to the generic point of $N$ and such that the image of the generic point $\operatorname{Spec}\left(L((t))\right)$ falls in $N^{\prime} \backslash \bar{N}$.

This version of Curve Selection Lemma has many applications in the study of arc spaces of
algebraic varieties (see for example \cite{JP12a, JP12, Lej80, Lej99, LR12, P, Reg06}). Especially it plays an essential role in the proofs of  of Nash problem for surfaces in \cite{JP12} and for terminal singularities in \cite{dFD16}.

In this paper we introduce stronger versions of Curve Selection Lemma. More concretely, we prove two versions of the Curve Selection Lemma  in arc spaces under the assumption that either the closure of the set $\{x\}$ is generically stable or $x$ is a {\em non-degenerate $k$-arc} (i.e. the corresponding morphism can not factor through the singular locus of the considered variety, see Section \ref{sec31}) and $A$ is generically stable. The first version (Theorem \ref{thm32}) answers affirmatively a question by Reguera in \cite{Reg06}. For the proof of the second version  (Theorem \ref{main}), we need to generalize the structure theorem of Grinberg-Kazhdan and Drinfeld to generically stable subsets. Precisely, we prove that the formal neighbourhood of a generically stable subset of an arc space at a non-degenerate $k$-arc is isomorphic to the product of a local adic Noetherian formal $k$-scheme and an infinitely dimensional affine formal disk. 

\section{Curve selection lemma in Noetherian schemes}
Throughout this note, $k$ is a field. If $x$ is a point of a $k$-scheme then $k_x$ denotes the residue field of $x$. In this section we prove a strong versions of Curve Selection Lemma for Noetherian schemes which generalizes the version stated in the introduction.
\begin{theorem}\label{thm12}
Let $X$ be an irreducible Noetherian $k$-scheme and let $z$ be its generic point. Then for any point $x$ of $X$ there exist an extension $K$ of $k_x$ and an arc $\gamma\colon\mathrm{Spec}(K[[t]])\to X$ which maps the closed point to $x$ and the generic point to $z$.
\end{theorem}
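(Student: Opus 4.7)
The plan is to reduce to the local ring at $x$ and then construct the arc via a dominating discrete valuation ring together with Cohen's structure theorem. Set $R := \cO_{X,x}$, a Noetherian local $k$-algebra with maximal ideal $\m$ and residue field $k_x$. Since $X$ is irreducible, $\Spec R$ has a unique minimal prime $\mathfrak p$, and under the canonical morphism $\Spec R \to X$ the point $\mathfrak p$ corresponds to the generic point $z$. Passing to the quotient, we may replace $R$ by $R/\mathfrak p$ and assume that $R$ is a Noetherian local \emph{domain} with fraction field $F$, so that $z$ corresponds to the zero ideal. If $\dim R = 0$ then $x = z$ and $R = k_x$; take $K := k_x$ and let $\gamma$ be the composition $\Spec K[[t]] \to \Spec k_x \to X$.

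Assume henceforth $\dim R \geq 1$. The crucial input is a classical result in commutative algebra: every Noetherian local domain admits a dominating discrete valuation ring inside its fraction field, i.e., there exists a DVR $(V,\mathfrak n)$ with $R \subseteq V \subseteq F$ and $\mathfrak n \cap R = \m$. Setting $K := V/\mathfrak n$, the local inclusion $R \hookrightarrow V$ induces a field extension $k_x \hookrightarrow K$.

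Next, replace $V$ by its completion $\hat V$, which is a complete DVR with residue field $K$ and still dominates $R$. Since $R$ is a $k$-algebra, so is $\hat V$; and as $\mathrm{char}(K) = \mathrm{char}(k)$, the ring $\hat V$ is equicharacteristic. Cohen's structure theorem then yields an isomorphism of local rings $\hat V \cong K[[t]]$. The composite
\[
R \hookrightarrow V \hookrightarrow \hat V \cong K[[t]]
\]
is a local injective homomorphism: locality sends the closed point $(t)$ of $\Spec K[[t]]$ to $\m$, hence to $x$ in $X$, with induced residue-field map the inclusion $k_x \hookrightarrow K$; injectivity sends the generic point $(0)$ to the zero ideal of $R$, hence to $z$. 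This defines the desired arc $\gamma$.

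The main technical obstacle is the existence of a dominating DVR when $\dim R \geq 2$; one standard route, going back to Zariski, uses a sequence of quadratic transforms (successive blowups along the maximal ideal) combined with an appropriate choice of closed point on the exceptional fibres, reducing eventually to the $1$-dimensional case, where Krull--Akizuki applied to the integral closure of $R$ in $F$ furnishes a DVR directly. Once this step is granted, Cohen's theorem and the tracking of closed/generic points in the paragraph above are essentially formal.
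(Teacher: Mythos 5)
Your proof is correct and is essentially the paper's argument in different packaging: both proofs produce a discrete valuation ring dominating $\mathcal{O}_{X,x}$ whose fraction field is that of $X$ (the paper constructs it explicitly as the local ring at the generic point of an exceptional divisor of the blowup of $X$ along $\overline{\{x\}}$, while you invoke the classical dominating-DVR lemma, itself proved by exactly such blowups plus Krull--Akizuki), and then pass to the completion and identify it with $K[[t]]$. If anything, completing the DVR directly and citing Cohen, as you do, is a little cleaner than the paper's step of normalizing the completion of the one-dimensional local ring $\mathcal{O}_{Y,y}$.
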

\begin{proof}
Consider the blowing-up $h:Y\to X$ of $X$ along the closure $Z$ of $\{x\}$ in $X$. Let $E\subset Y$ be a prime exceptional divisor which dominates $Z$. Let $y$ be the generic point of $E$ and let $\mathcal{O}_{Y,y}$ the localization of $\mathcal{O}_{Y}$ at $y$. Since $\mathcal{O}_{Y}$ is Noetherian and since $E$ is a divisor on $Y$, $\mathcal{O}_{Y,y}$ is a Noetherian ring of dimension $1$. It follows that the normalization of the completion of $\mathcal{O}_{Y,y}$ is isomorphic to $K[[t]]$, where $K=k_y$.  Let $\phi$ be the arc defined by the following composition of injective morphisms
$$ {\mathcal O}_{X}\to {\mathcal O}_{X,x}\to {\mathcal O}_{Y,y}\to \hat{\mathcal O}_{Y,y}\to K[[t]],$$
where the last morphism is the normalization. Then $\phi(0)=x$ since $ {\mathcal O}_{X,x}\to K[[t]]$ is a morphisms of local rings, and $\phi(\eta)=z$ due to the injectivity of the morphism $ {\mathcal O}_{X}\to K[[t]]$.
\end{proof}
The following corollary is a direct consequence of the theorem where we consider the closure of $\{y\}$ instead of $X$.
\begin{corollary}
Let $X$ be a Noetherian $k$-scheme. Let $x,y$ be two points of $X$ such that $x$ is a specilization of $y$. Then  there exist an extension $K$ of $k_x$ and an arc $\gamma\colon\mathrm{Spec}(K[[t]])\to X$ which maps the closed point to $x$ and the generic point to $y$.
\end{corollary}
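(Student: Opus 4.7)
The plan is to reduce the corollary to Theorem~\ref{thm12} by restricting attention to the closure of $\{y\}$, as the remark following the corollary suggests. The key observation is that a specialization relation in $X$ can be repackaged as a generic-to-closed-point relation in a smaller irreducible subscheme.

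First I would let $Y$ denote the Zariski closure of $\{y\}$ in $X$, endowed with its reduced induced subscheme structure. Then $Y$ is an integral $k$-scheme whose generic point is exactly $y$ (this is the defining property of the reduced closure of a point). Since $X$ is Noetherian, so is the closed subscheme $Y$, and since $y$ is the unique generic point of $Y$, the scheme $Y$ is irreducible. The hypothesis that $x$ is a specialization of $y$ in $X$ means precisely that $x$ lies in the closure of $\{y\}$, i.e., $x \in Y$, and the residue field of $x$ as a point of $Y$ coincides with $k_x$ (since the immersion $Y \hookrightarrow X$ induces an isomorphism on residue fields at each point of $Y$).

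Next I would apply Theorem~\ref{thm12} to the irreducible Noetherian $k$-scheme $Y$ and its point $x$. This produces a field extension $K$ of $k_x$ together with an arc
\[
\gamma_Y \colon \Spec(K[[t]]) \to Y
\]
sending the closed point of $\Spec(K[[t]])$ to $x$ and the generic point to the generic point of $Y$, which is $y$. Composing with the closed immersion $\iota\colon Y \hookrightarrow X$ yields the desired arc $\gamma := \iota \circ \gamma_Y \colon \Spec(K[[t]]) \to X$, whose closed point maps to $x$ and whose generic point maps to $y$.

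There is no real obstacle here: all the content is already in Theorem~\ref{thm12}. The only sanity checks needed are that $Y$ is still Noetherian (inherited from $X$) and irreducible with generic point $y$ (automatic for the reduced closure of a single point), and that the composition with a closed immersion preserves the images of the closed and generic points of $\Spec(K[[t]])$, which is immediate.
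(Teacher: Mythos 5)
Your proof is correct and is exactly the argument the paper intends: the paper states the corollary "is a direct consequence of the theorem where we consider the closure of $\{y\}$ instead of $X$," which is precisely your reduction to the reduced closure of $\{y\}$ followed by an application of Theorem \ref{thm12}. No issues.
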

\begin{corollary}\label{coro22}
Let $X$ be an irreducible Noetherian $k$-scheme of positive dimension. Let $x$  be a non-isolated $k$-point of $X$ and $Z$ a strictly closed subset of $X$. Then  there exists an arc $$\gamma\colon\mathrm{Spec}(k[[t]])\to X$$ which maps the closed point to $x$ and the generic point outside $Z$.
\end{corollary}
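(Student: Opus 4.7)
The plan is to apply Theorem~\ref{thm12} to $X$ at the point $x$, and then to descend the residue field of the resulting arc from an extension of $k$ down to $k$ itself, using the hypotheses that $x$ is a $k$-point and is non-isolated.

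Applying Theorem~\ref{thm12} at $x$, and noting that $k_x=k$, produces some extension $K/k$ together with an arc $\gamma_0\colon \Spec(K[[t]])\to X$ sending the closed point to $x$ and the generic point to the generic point $\eta$ of $X$. Since $Z\subsetneq X$ is strictly closed in the irreducible scheme $X$, the point $\eta$ lies outside $Z$, so the generic-point condition holds automatically for $\gamma_0$. What remains is to replace $K$ by $k$.

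For the descent, I reexamine the blow-up construction in the proof of Theorem~\ref{thm12}. Because $x$ is $k$-rational, the closed subscheme $\{x\}$ and the blow-up $h\colon Y\to X$ along it are defined over $k$, and the exceptional fiber $E=h^{-1}(\{x\})$ is a projective $k$-scheme, of positive dimension by non-isolatedness. Rather than passing to the generic point of an arbitrary prime exceptional divisor (which only produces a residue field extension of $k$), I select a $k$-rational closed point $y\in E$ adapted to $Z$ and then, by cutting $Y$ near $y$ with an appropriate regular system of parameters (the ``cutting method'' alluded to in the introduction), cut down to a one-dimensional Noetherian local subscheme whose local ring at $y$ has residue field $k$ and whose completion has normalization $k[[t]]$. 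Composing
\[
\O_X \to \O_{X,x} \to \O_{Y,y} \to \hat\O_{Y,y} \to k[[t]]
\]
then yields the desired arc $\gamma\colon \Spec(k[[t]])\to X$, whose closed point is sent to $x$ and whose generic point is sent to a generization of $y$ lying in $X\setminus Z$ by the choice of $y$.

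The main obstacle is this descent step. Over a general field $k$, the exceptional fiber $E$ above a $k$-point need not possess $k$-rational points, and the normalization of a one-dimensional complete local Noetherian $k$-algebra with residue field $k$ need not be $k[[t]]$. The non-isolated hypothesis on the $k$-point $x$ is invoked precisely to guarantee the existence of a $k$-rational direction in the projectivized tangent cone of $X$ at $x$ that is not absorbed by $Z$; verifying that such a direction exists and produces a $k$-rational curve through $x$ avoiding $Z$ is the technical heart of the proof.
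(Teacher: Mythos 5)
Your strategy diverges from the paper's at the first step, and the divergence matters. You apply Theorem~\ref{thm12} to all of $X$, obtaining an arc through the generic point $z$ of $X$, and then try to descend the coefficient field from $K$ to $k$. The paper instead cuts first: since $Z$ is strictly closed, it chooses a one-dimensional closed subset $Y\subset X$ containing $x$ with $Y\cap Z\subseteq\{x\}$, and then runs the normalization-of-the-completion argument directly on the one-dimensional local ring of $Y$ at $x$, composing $\mathcal{O}_Y\to\hat{\mathcal{O}}_{Y,x}\to k[[t]]$. Cutting first is not cosmetic: the conclusion only asks the generic point of the arc to avoid $Z$, not to equal $z$, and insisting on hitting $z$ forces the coefficient field of the arc to contain $k_z$, which makes ``descent to $k$'' hopeless as literally stated. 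So the generic point of your final arc should be a generization of $x$ inside the auxiliary curve, not the generic point of $X$; your write-up conflates these.

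More seriously, the step you yourself call ``the technical heart'' is never carried out, and the claim you would need is false. You assert that non-isolatedness of the $k$-point $x$ guarantees a $k$-rational direction, hence a branch through $x$ whose completed local ring has normalization $k[[t]]$. Counterexample: $k=\mathbb{R}$, $X=V(x_1^2+x_2^2)\subset\mathbb{A}^2_{\mathbb{R}}$, $x$ the origin, $Z=\{x\}$. Here $X$ is integral of dimension one and $x$ is a non-isolated $\mathbb{R}$-point, yet any local homomorphism $\mathcal{O}_{X,x}\to\mathbb{R}[[t]]$ sends $x_1,x_2$ to power series with $x_1(t)^2+x_2(t)^2=0$, forcing $x_1(t)=x_2(t)=0$; every $\mathbb{R}[[t]]$-arc centered at $x$ is therefore constant and its generic point lies in $Z$. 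Equivalently, the normalization of $\hat{\mathcal{O}}_{X,x}$ is $\mathbb{C}[[t]]$, not $\mathbb{R}[[t]]$, so no choice of ``direction'' can repair this. (To be fair, the paper's own two-line proof is silent on exactly this point, and the statement as given appears to need an additional hypothesis such as $k$ algebraically closed, or a finite extension of $k$ permitted in the conclusion.) As it stands, your proposal correctly locates the obstruction but does not overcome it, and in this generality it cannot be overcome; that is a genuine gap.
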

\begin{proof}
Since $Z$ is a closed subset of $X$, we can find another closed subset $Y$ of $X$ of dimension $1$ containing $x$ such that $Y\cap Z\subset \{x\}$. Then, as in the proof of Theorem \ref{thm12}, one has a morphism
 $$ {\mathcal O}_{Y}\to {\mathcal O}_{Y,y}\to \hat{\mathcal O}_{Y,y}\to k[[t]],$$
which defines the expected arc.
\end{proof}
\section{Curve selection lemma in arc spaces}
\subsection{Generically and weakly stable subsets of the space of arcs}\label{sec31}
Let $X$ be a $k$-variety. For any $n$ in $\mathbb N$, denote by $X_n$ (or, $J_nX$) the $k$-scheme of $n$-jets of $X$, which represents the functor from the category of $k$-algebras to the category of sets sending a $k$-algebra $A$ to $\Mor_{k\text{-schemes}}(\Spec(A[t]/A(t^{n+1})),X)$. For $m\geq n$, the truncation $k[t]/(t^{m+1})\to k[t]/(t^{n+1})$ induces a morphism of $k$-schemes 
$$\pi_n^m:X_m\to X_n.$$ 
We call the projective limit 
$$X_\infty:=\varprojlim X_n$$ 
the {\it arc space of $X$}. For any field extension $K\supseteq k$, the $K$-points, or {\em $K$-arcs} of $X_\infty$ correspond one-to-one to the $K[[t]]$-points of $X$. A $K$-arc $x$ of $X_\infty$ is said to be {\em non-degenerate} if its corresponding morphism $\operatorname{Spec}(K[[t]]) \rightarrow X_\infty$ can not factor through the singular locus $\mathrm{Sing}X$ of $X$.

For each $n\in \mathbb N$ we denote by $\pi_n$ (or, $\pi_{n,X}$) the natural morphism $X_\infty\to X_n.$ Let $A$ be a subset of the arc space $X_\infty$. The set $A$ is said to be {\it weakly stable at level $n$}, for some $n$ in $\mathbb N$, if $A$ is a union of fibers of $\pi_n\colon X_\infty\to X_n$; the set $A$ is said to be {\it weakly stable} if it is weakly stable at some level.
\begin{definition}\label{def31}
A locally closed subset $N$ of $X_{\infty}\setminus {(\mathrm{Sing}X)_\infty}$ will be called {\it generically stable} if there exists an open affine subscheme $W$ of $X_{\infty}$, such that $N \cap W$ is weakly stable.
\end{definition}
\begin{remark} 
Our notion of generic stability is slightly different from that of \cite[Definition 3.1]{Reg06}. They coincide if the base field $k$ is perfect by \cite[Theorem 4.1]{Reg06}.
\end{remark}

\begin{lemma} \cite[Corollary 4.6]{Reg06}\label{lm21}
 Let $N$ be an irreducible generically stable subset of $X_{\infty}$, and let $z$ be its generic point. Then:
\begin{itemize}
\item[(i)] the ring $\widehat{\mathcal{O}_{X_{\infty}, z}}$ is Noetherian;
\item[(ii)]  if $N^{\prime}$ is an irreducible subset of $X_{\infty}$ such that $N^{\prime} \supset N, N \neq N^{\prime}$, then $\widehat{\mathcal{O}_{N^{\prime}, z}}$ is a Noetherian local ring of dimension $\geqslant 1$.
\end{itemize}
\end{lemma}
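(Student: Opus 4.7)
My plan is to reduce the infinite-dimensional statement to finite-dimensional commutative algebra by exploiting the cylinder structure that generic stability provides. First, I would unwind the definition: choose an open affine $W\subset X_\infty$ and an integer $n$ so that $N\cap W=\pi_n^{-1}(C)\cap W$, where $C=\pi_n(N\cap W)$ is a constructible subset of the finite-type $k$-scheme $X_n$. Since $N$ is irreducible and $N\cap W$ is a non-empty open of $N$, the image $C$ is irreducible with generic point $y=\pi_n(z)$, and $z$ lies in $W$.

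The next step is to show that, after possibly enlarging $n$, the fibre $\pi_n^{-1}(y)\cap W$ is an integral scheme with generic point $z$, and that the equality $\m_z = \m_y\,\cO_{X_\infty,z}$ holds. The non-degeneracy condition $N\subset X_\infty\setminus(\Sing X)_\infty$ enters here: since the arc corresponding to $z$ does not factor through $\Sing X$, for $n$ sufficiently large each truncation $\pi_m^{m+1}\colon X_{m+1}\to X_m$ is a smooth affine bundle in a Zariski neighbourhood of $\pi_m(z)$ for every $m\geq n$. Taking an inverse limit exhibits the fibre of $\pi_n$ at $y$, around $z$, as a filtered limit of affine spaces over $k_y$, whose generic point is $z$ and whose local maximal ideal is generated by $\m_y$.

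For part (i), once $\m_z=\m_y\,\cO_{X_\infty,z}$ is established, the $\m_z$-adic completion $\widehat{\cO_{X_\infty,z}}$ coincides with the $\m_y$-adic one. I would then argue that this completion is a Tate-type restricted power series extension of the Noetherian complete local ring $\widehat{\cO_{X_n,y}}$ (Noetherian because $X_n$ is of finite type over $k$) in the ``fibre coordinates'' introduced by the tower $X_m\to X_n$ for $m\geq n$; such restricted power series over a Noetherian complete local base remain Noetherian. This ring-theoretic identification of $\widehat{\cO_{X_\infty,z}}$ is the step I expect to be the main obstacle, as it requires a careful passage to the limit $X_\infty=\varprojlim X_m$ at the level of the localised rings $\cO_{X_m,\pi_m(z)}$ and their $\m_y$-adic completions.

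Part (ii) should follow almost formally from (i). The closed irreducible $N'$ induces a surjection $\cO_{X_\infty,z}\twoheadrightarrow \cO_{N',z}$, so $\widehat{\cO_{N',z}}$ is a quotient of $\widehat{\cO_{X_\infty,z}}$, and Noetherianness transfers to the quotient. Since $N\subsetneq N'$ are irreducible and $z$ is the generic point of $N$, the point $z$ is a proper specialisation of the generic point $z'$ of $N'$, so $\dim\cO_{N',z}\geq 1$, and this lower bound is preserved under $\m_z$-adic completion.
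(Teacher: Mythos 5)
The paper does not actually reprove this lemma: its ``proof'' is the single remark that the argument of \cite[Corollary 4.6]{Reg06} goes through with the (slightly modified) notion of generic stability used here. Your sketch is therefore not parallel to anything written in the paper; it is essentially a reconstruction of Reguera's original argument, and its skeleton is the right one: use the cylinder structure $N\cap W=\pi_n^{-1}(C)\cap W$ to exhibit $z$ as the generic point of the fibre of $\pi_n$ over $y=\pi_n(z)$, deduce $\m_z=\m_y\,\cO_{X_\infty,z}$, and conclude Noetherianity of the completion from finite generation of the maximal ideal. The role of non-degeneracy (i.e.\ $N\cap(\Sing X)_\infty=\emptyset$, so the generic arc $z$ has finite contact with the Jacobian ideal) is exactly as you describe: it feeds the Denef--Loeser fibration theorem. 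Be aware, however, that that theorem concerns the maps $\pi_{m+1}(X_\infty)\to\pi_m(X_\infty)$ between \emph{images} of the arc space, restricted to the constructible locus of bounded contact order; it is not a Zariski-local statement about $X_{m+1}\to X_m$. Identifying $\pi_n^{-1}(y)$ near $z$ with the limit of those fibres is precisely the technical content of Reguera's Proposition~4.5, and you correctly flag it as the main obstacle.

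Two further soft spots. First, the ``restricted power series'' identification of $\widehat{\cO_{X_\infty,z}}$ is both unnecessary and not quite what the completion is: since $z$ is the generic point of the fibre, the residue field $k_z$ already contains the whole function field $k_y(u_1,u_2,\dots)$ of the fibre coordinates, so the completion is not $\widehat{\cO_{X_n,y}}\langle u_\bullet\rangle$. All you need is that $\m_z$ is generated by finitely many elements (lifts of generators of $\m_y$), plus the standard fact that the $I$-adic completion of a ring $R$ with $I$ finitely generated and $R/I$ Noetherian is Noetherian. Second, the claim that ``the lower bound $\dim\geq 1$ is preserved under completion'' is not automatic: $\cO_{N',z}$ is not Noetherian, and completion of a non-Noetherian local ring can collapse chains of primes. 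It does hold here, but by an argument you should make explicit: $\cO_{N',z}$ is a local domain whose maximal ideal $\m\neq 0$ is finitely generated (being the image of $\m_z$); if $\widehat{\cO_{N',z}}$ were Artinian, then $\m^N$ would map to zero in the completion, giving $\m^N=\m^{N+1}=\m\cdot\m^N$, and Nakayama (legitimate because $\m^N$ is finitely generated) would force $\m^N=0$, contradicting the domain hypothesis. Hence the Noetherian local ring $\widehat{\cO_{N',z}}$ is not Artinian and has dimension at least $1$.
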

\begin{proof}
The proof of \cite[Corollary 4.6]{Reg06} works with our definition of generically stable subsets of arc spaces.
\end{proof}
Most of the locally closed subsets considered in the literature are generically stable. Examples are cylindrical sets, contact loci with ideals and maximal divisorial sets  (see \cite{ELM} and \cite{Ishi08}). The following lemma gives us several additional classes of generically stable subsets of the arc space $X_\infty$. cf. \cite[Lemma 3.6]{Reg06}.
\begin{lemma}
Let $N$ be an irreducible  locally closed subset of $X_\infty$ with the generic point $z$. Then $N$ is generically stable if one of the following statements hold:
\begin{itemize}
\item[(i)] $N$ is semi-algebraic\footnote{see  \cite[(2.2)]{DL99}, \cite[3.4]{Reg06}}  and the corresponding morphism of $z$ is dominant;
\item[(ii)] there exists a resolution of singularities $h\colon Y\to X$ and a prime divisor $E$ of $Y$ such that $h_\infty$ maps the generic point of $\pi_Y^{-1}(E)$ to $z$.
\end{itemize}
\end{lemma}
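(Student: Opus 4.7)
For each of the two conditions, the plan is to exhibit an affine open $W\subset X_\infty$ containing $z$ such that $N\cap W = \pi_{n,X}^{-1}(B)\cap W$ for some $n\in\mathbb N$ and some $B\subset X_n$, i.e.\ such that $N\cap W$ is weakly stable. Since $N$ is irreducible with generic point $z$, this is exactly what is required for generic stability.

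For (i), I would unfold the Denef--Loeser definition of a semi-algebraic subset of $X_\infty$: it is a finite Boolean combination of cylinders $\pi_{m,X}^{-1}(C)$ with $C\subset X_m$ constructible, and order conditions $\{\gamma\in X_\infty:\ord_t f(\gamma)\in S\}$ for regular functions $f$ on affine opens of $X$ and semi-algebraic $S\subset\mathbb N$, where only finitely many such $f$'s occur. Dominance of the arc $\gamma_z\colon\Spec k_z[[t]]\to X$ corresponding to $z$ amounts, for irreducible $X$, to the injectivity of the pullback $\gamma_z^{\ast}\colon \mathcal O_X(U)\to k_z[[t]]$ on any affine open $U$ containing $\gamma_z(0)$; hence $\ord_t f(z)<\infty$ for every nonzero $f$ appearing in the finite description of $N$. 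Choose $n$ larger than all these finitely many values. The locus $\{\gamma\in X_\infty:\ord_t f(\gamma)\leq n\ \text{for every relevant}\ f\}$ is open, cylindrical at level $n$, and contains $z$; on it, every remaining order condition reduces to a condition on the $n$-jet. A sufficiently small affine open neighborhood $W$ of $z$ inside this locus therefore satisfies the required equality.

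For (ii), I would start from the observation that $F:=\pi_{0,Y}^{-1}(E)$ is weakly stable in $Y_\infty$ at level $0$, with generic point $e$ sent by $h_\infty$ to $z$. Choose an affine open $U\subset Y$ meeting the generic point of $E$, so that $F\cap U_\infty$ is an affine open weakly stable neighborhood of $e$. If $E$ is not $h$-exceptional, then after shrinking $U$ the map $h|_U$ becomes an open immersion into $X$, so $h_\infty|_{U_\infty}$ is an open immersion of arc spaces, and $h_\infty(F\cap U_\infty)$ is a weakly stable open neighborhood of $z$ in $X_\infty$. If $E$ is $h$-exceptional the situation is more delicate, since $h_\infty$ is far from being an isomorphism along $F$ and the image $h_\infty(F)$ need not be locally closed in $X_\infty$; this is the main obstacle. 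I would then invoke the standard constructibility result from the theory of motivic integration, which asserts that the image of a cylinder under $h_\infty$ is semi-algebraic in $X_\infty$ (cf.~\cite{DL99}); dominance of $\gamma_z$ still holds because $h$ is birational and $\gamma_e$ is non-degenerate on the smooth variety $Y$. Part (i), applied to the closure of $h_\infty(F\cap U_\infty)$, then yields the conclusion.
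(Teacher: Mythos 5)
The paper does not actually prove this lemma --- it is stated with only a pointer to \cite[Lemma 3.6]{Reg06} immediately before it --- so there is no internal proof to compare against; your proposal follows the spirit of Reguera's and Denef--Loeser's arguments, and the overall strategy (exhibit an open cylinder around $z$ on which the defining conditions of $N$ become level-$n$ jet conditions) is the right one. Two points in part (i) deserve more care. First, the ``finite Boolean combination of cylinders and order conditions'' normal form is not the definition of a semi-algebraic subset in \cite{DL99}: the definition allows quantifiers over auxiliary integer variables as well as angular-component conditions, and the reduction to the quantifier-free form you use is exactly Pas's quantifier elimination, which should be invoked explicitly (the $\mathrm{ac}$-conditions are then also jet conditions on the locus where the relevant orders are bounded). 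Second, at the end you shrink to ``a sufficiently small affine open neighborhood $W$ of $z$'': for $N\cap W$ to remain a union of fibres of $\pi_n$ you must take $W$ to be an affine open \emph{cylinder} $\pi_m^{-1}(V)$ --- these do form a basis of affine opens, since $\pi_m$ is an affine morphism --- because intersecting a cylinder with an arbitrary affine open destroys weak stability.

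Part (ii) has the more substantive gaps. (a) You apply part (i) to the \emph{closure} of $h_\infty(F\cap U_\infty)$, but the closure of a semi-algebraic subset of $X_\infty$ is not known (and is not needed) to be semi-algebraic; you should instead run the argument of (i) directly on the semi-algebraic set $h_\infty(F\cap U_\infty)$ itself, whose closure equals $\overline{\{z\}}$ because $F=\pi_{0,Y}^{-1}(E)$ is irreducible (as $Y$ is smooth) with generic point $e$ and $h_\infty(e)=z$. (b) Even once \emph{some} irreducible set with generic point $z$ is shown to be generically stable, the lemma asserts that the given $N$ is; you need the easy but unstated observation that for irreducible locally closed sets generic stability depends only on the generic point: if $\overline{\{z\}}\cap W=\pi_n^{-1}(B)\cap W$ and $N=\overline{\{z\}}\cap U$ with $U$ open, replace $W$ by an affine open cylinder inside $U\cap W$ containing $z$. (c) The dominance of $z$ needed to feed into (i) should be justified directly: the generic point $e$ of $\pi_{0,Y}^{-1}(E)$ corresponds to a dominant arc because $Y$ is smooth and irreducible, so $Z_\infty\cap\pi_{0,Y}^{-1}(E)$ is nowhere dense for every proper closed $Z\subsetneq Y$; since $h$ is birational, $z=h_\infty(e)$ is dominant on $X$. ``Non-degenerate'' is not the relevant property here, since on a smooth $Y$ every arc is trivially non-degenerate. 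With these repairs the argument goes through.
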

\subsection{Reguera's Curve Selection Lemma}
\begin{theorem}\cite{Reg06}
Let $N$ and $ N'$ be irreducible locally closed subsets of  $X_\infty$ such that $\bar N\subsetneq N'$ and $N$ is generically stable. Let $z,z'$ be generic points of $N,N'$ respectively. Then there exists an arc 
$$\phi\colon \Spec K[[t]]\to N',$$
where $K$ is a finite algebraic extension of $k_z$, such that $\phi(0)=z$ and $\phi(\eta)\in N'\setminus N$.
\end{theorem}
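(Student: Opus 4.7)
My plan is to reduce the arc-space problem to the Noetherian curve selection lemma (Theorem~\ref{thm12}) by working inside the completed local ring at $z$, cutting it down to dimension one, and then refining via a normalization step to keep the residue field finite over $k_z$. Write $R := \mathcal{O}_{N',z}$ and $\widehat R := \widehat{\mathcal{O}_{N',z}}$; by Lemma~\ref{lm21}(ii), $\widehat R$ is a Noetherian local ring of dimension $\geq 1$, and since $N'$ is irreducible $R$ is a domain, so every minimal prime of $\widehat R$ contracts to $(0) \subset R$.

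First I would pick a minimal prime $\mathfrak p$ of $\widehat R$ with $\dim \widehat R/\mathfrak p \geq 1$, which exists because $\dim \widehat R \geq 1$. The quotient $\widehat R/\mathfrak p$ is then a complete Noetherian local domain with residue field $k_z$. Inside it I would choose a prime $\mathfrak q \supseteq \mathfrak p$ so that $S := \widehat R/\mathfrak q$ is one-dimensional (take $\mathfrak q = \mathfrak p$ when $\dim \widehat R/\mathfrak p = 1$, and otherwise pick any coheight-one prime of $\widehat R/\mathfrak p$). The key observation is that the fiber of $\Spec \widehat R \to \Spec R$ over the closed point is $\Spec(\widehat R / \mathfrak m_R \widehat R) = \Spec k_z$; consequently \emph{every} non-closed point of $\Spec \widehat R$ contracts to a non-maximal prime of $R$. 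Hence the generic point of $\Spec S$ maps to a proper generization of $z$ in $N'$, which automatically lies in $N' \setminus \bar N \subseteq N' \setminus N$, since $z$ is already the generic point of $\bar N$.

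Next, $S$ is excellent (as a quotient of a complete Noetherian local ring), so its normalization $\widetilde S$ is a finite $S$-module. As a one-dimensional normal Noetherian domain that is finite over the Henselian ring $S$, $\widetilde S$ is a local complete DVR; in equi-characteristic the Cohen structure theorem gives $\widetilde S \cong K[[t]]$, and finiteness of $\widetilde S$ over $S$ forces $K$ to be a finite extension of $k_z$. The composite
\[ R \longrightarrow \widehat R \twoheadrightarrow S \hookrightarrow \widetilde S = K[[t]] \]
then defines a morphism $\Spec K[[t]] \to \Spec R \to N'$ whose closed point lands on $z$ and whose generic point lands in $N' \setminus N$.

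I expect the only real obstacle to be the finiteness of $K$ over $k_z$. A naive application of Theorem~\ref{thm12} directly to $\Spec \widehat R/\mathfrak p$ would produce $K$ as the residue field at the generic point of an exceptional divisor, which is transcendental over $k_z$ whenever $\dim \widehat R/\mathfrak p \geq 2$. Cutting down to dimension one inside $\widehat R$ before normalizing sidesteps this, and the fiber computation $\mathfrak m_R \widehat R = \mathfrak m_{\widehat R}$ guarantees that the cutting process cannot accidentally force the generic point of the cut to collapse back to $z$.
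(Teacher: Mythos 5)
Your overall route --- pass to $\widehat{\mathcal{O}_{N',z}}$, kill a minimal prime, cut down to a one-dimensional complete local domain, normalize, and invoke Cohen's structure theorem --- is not the argument the paper gives (the paper only cites this statement from \cite{Reg06} and instead proves the stronger Theorem~\ref{thm32} by applying Theorem~\ref{thm12} directly to $\mathcal{O}_{N',z}$ so as to get an \emph{injective} map into $K[[t]]$ and hence $\phi(\eta)=z'$). Your route is essentially Reguera's original proof of this corollary, and it has a real advantage that you correctly identified: cutting to dimension one before normalizing is what actually delivers $K$ finite over $k_z$, whereas Theorem~\ref{thm12} as stated only produces some extension $K$ of $k_x$ (the residue field of the generic point of an exceptional divisor, typically of positive transcendence degree). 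The price is that you lose injectivity of $\mathcal{O}_{N',z}\to K[[t]]$ and can only conclude $\phi(\eta)\in N'\setminus \bar N$, which is exactly the weaker conclusion of the present statement. The normalization and Cohen steps are fine: $S$ excellent gives finiteness of $\widetilde S$, and finiteness over the complete local $S$ forces $K/k_z$ finite.

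There is, however, one step you assert as an ``observation'' that is genuinely nontrivial and is where the generic stability hypothesis has to be used again: the claim that $\widehat R/\mathfrak m_R\widehat R=k_z$, equivalently that $\mathfrak m_R\widehat R$ is (up to radical) the maximal ideal of $\widehat R$, so that no coheight-one prime of $\widehat R$ can contract to $\mathfrak m_R$. If $R$ were Noetherian this would be immediate from flatness of completion, but $R=\mathcal{O}_{N',z}$ is \emph{not} known to be Noetherian --- Lemma~\ref{lm21} only asserts that the completion $\widehat R$ is --- and for a general non-Noetherian local ring the inverse-limit topology on $\widehat R=\varprojlim R/\mathfrak m_R^n$ need not agree with the $\mathfrak m_{\widehat R}$-adic one, so $\mathfrak m_R\widehat R$ can a priori be strictly smaller than $\mathfrak m_{\widehat R}$ even when $\widehat R$ is Noetherian. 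This identification is precisely part of what Reguera proves about stable points (it is contained in the finer statements surrounding \cite[Prop.~4.5, Cor.~4.6]{Reg06}, not in the bare Noetherianity quoted here as Lemma~\ref{lm21}), so you must either cite that or reprove it from weak stability at level $n$. The same caveat applies to your opening remark that every minimal prime of $\widehat R$ contracts to $(0)\subset R$ (again a flatness statement that is not free in the non-Noetherian setting), though as far as I can see you never actually use that remark. With the fiber claim properly sourced, the rest of your argument, including the soberness argument showing that a proper generization of $z$ in $N'$ cannot lie in $\bar N$, is correct.
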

\begin{corollary}
Assume $\mathrm{char}\ k=0$. Let $N$ be an irreducible subset of $X_\infty$ strictly contained in an irreducible component of $X^{\mathrm{Sing}}_\infty$ with the generic point $z$. Then, there exists an arc 
$$\phi\colon \Spec K[[t]]\to N',$$
where $K$ is a finite algebraic extension of $k_z$, such that $\phi(0)=z$ and $\phi(\eta)\in X^{\mathrm{Sing}}_\infty\setminus N$.
\end{corollary}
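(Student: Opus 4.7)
The plan is to deduce this corollary directly from Theorem 3.2 by taking $N'$ in that theorem to be the irreducible component $N_0$ of $X^{\mathrm{Sing}}_\infty$ that strictly contains $N$. Since $N_0$ is closed in $X_\infty$, the inclusion $N\subsetneq N_0$ gives $\bar N\subseteq N_0$; reading ``strictly contained'' as $\bar N\subsetneq N_0$ (and replacing $N$ by $\bar N$ if needed, which preserves the generic point $z$), the inclusion hypothesis $\bar N\subsetneq N'$ of Theorem 3.2 is satisfied.

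The role of the characteristic zero assumption is to invoke resolution of singularities together with the Nash-type description of arc-space components: in characteristic zero, every irreducible component of $X^{\mathrm{Sing}}_\infty$ coincides with a maximal divisorial set $N_E$ associated to some essential divisor $E$ over $X$, realized as the closure of $h_\infty(\pi_Y^{-1}(E))$ for a log-resolution $h\colon Y\to X$ in which $E$ appears as a prime exceptional divisor. By Lemma 3.6(ii), such $N_0=N_E$ is generically stable. This structural fact is what will let us verify the remaining hypothesis of Theorem 3.2, namely the generic stability of $N$: a weakly stable affine open $W\subseteq X_\infty$ witnessing the stability of $N_0$ meets $N$ in an open neighborhood of $z$, and restricting along this open transfers the needed stability to $N$.

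Granting this, Theorem 3.2 yields an arc
$$\phi\colon \Spec K[[t]]\to N_0$$
over a finite algebraic extension $K$ of $k_z$ with $\phi(0)=z$ and $\phi(\eta)\in N_0\setminus\bar N$. Since $N_0\subseteq X^{\mathrm{Sing}}_\infty$ and $N\subseteq\bar N$, we conclude $\phi(\eta)\in X^{\mathrm{Sing}}_\infty\setminus N$, which is the desired conclusion.

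The main obstacle I anticipate is precisely the descent of generic stability from the ambient component $N_0$ to the proper subset $N$: arbitrary subsets of generically stable sets are not automatically generically stable, so this step must genuinely exploit the characteristic-zero structure---either via the maximal divisorial-set description of $N_0$ above, or by combining it with the Grinberg--Kazhdan--Drinfeld-type structure theorem advertised in the introduction in order to control the geometry of $N$ near $z$ inside $N_0$. Once this point is settled, the rest is a formal specialization of Reguera's Curve Selection Lemma.
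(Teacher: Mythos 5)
The paper gives no proof of this corollary at all---it is stated as an immediate consequence of the preceding theorem (Reguera's Curve Selection Lemma)---so your attempt must stand on its own, and it does not: the gap you flag at the end is genuine and your proposed fix for it is wrong. To apply the Curve Selection Lemma with $N'$ equal to the component $N_0$ of $X^{\mathrm{Sing}}_\infty$ containing $N$, the hypothesis you must verify is generic stability of the \emph{small} set $N$, whereas the characteristic-zero input you invoke (resolution of singularities, the identification of the components of $X^{\mathrm{Sing}}_\infty$ with maximal divisorial sets $N_E$, and item (ii) of the lemma in Section \ref{sec31}) only yields generic stability of the \emph{component} $N_0$. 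Your transfer step---``a weakly stable affine open $W$ witnessing the stability of $N_0$ meets $N$ in an open neighborhood of $z$, and restricting along this open transfers the needed stability to $N$''---fails: weak stability of $N_0\cap W$ means $N_0\cap W$ is a union of fibres of some truncation $\pi_n$, and a proper subset of a union of fibres is in general not a union of fibres. Generic stability simply does not descend to arbitrary irreducible subsets: if $N$ is the closure of a single $k$-arc $\gamma\in N_0$, then $N\cap W$ would have to contain the entire (infinite-dimensional) fibre $\pi_n^{-1}(\pi_n(\gamma))\cap W$, which is false in general. The Lejeune-Jalabert--Reguera example at the end of Section \ref{sec3} shows that some such hypothesis on $N$ cannot be dispensed with, so no refinement of your restriction argument will close this gap.

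What your analysis does get right is the role of $\mathrm{char}\,k=0$: it guarantees that the ambient component $N_0$ is a legitimate candidate for $N'$ in the theorem (irreducible, coming from an essential divisor on a resolution). The reading consistent with the preceding theorem is that the corollary implicitly retains the hypothesis that $N$ is generically stable (equivalently, that $z$ is a stable point), and that is the hypothesis your proof is missing. Two secondary issues: $N_0$ is closed only in the locally closed set $X^{\mathrm{Sing}}_\infty$, not in $X_\infty$, so the condition $\bar N\subsetneq N'$ required by the theorem needs more care than ``$N_0$ is closed''; and your fallback suggestion of routing through the Grinberg--Kazhdan--Drinfeld structure theorem (Theorem \ref{main}) cannot rescue the argument, since it produces an arc over a field of infinite transcendence degree over $k$, not over a finite extension of $k_z$ as the corollary asserts, and it moreover requires $\gamma$ to be a non-degenerate $k$-point rather than an arbitrary generic point $z$.
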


\begin{question}\cite[Page 127]{Reg06}\label{que}
Let $N$ and $ N'$ be irreducible locally closed subsets of  $X_\infty$ such that $\bar N\subsetneq N'$ and $N$ is generically stable. Let $z,z'$ be generic points of $N,N'$ respectively. Is it true that there exists an arc 
$$\phi\colon \Spec K[[t]]\to N',$$
where $K$ is a finite algebraic extension of $k_z$, such that $\phi(0)=z$ and $\phi(\eta)=z'$.
\end{question}
\subsection{Strong versions of the Curve Selection Lemma}\label{sec3}
In this section we prove several strong versions of Curve Selection Lemma. The first one answers affirmatively Reguera' question (Question \ref{que}).
\begin{theorem}\label{thm32}
Let $N$ and $ N'$ be irreducible locally closed subsets of  $X_\infty$ such that $\bar N\subsetneq N'$ and $N$ is generically stable. Let $z,z'$ be generic points of $N,N'$ respectively. Then there exists an arc 
$$\phi\colon \Spec K[[t]]\to N',$$
where $K$ is a finite algebraic extension of $k_z$, such that $\phi(0)=z$ and $\phi(\eta)=z'$.
\end{theorem}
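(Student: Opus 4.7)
The plan is to reduce the statement to the Noetherian Curve Selection Lemma (Theorem~\ref{thm12}) by passing to the completion $R := \widehat{\mathcal{O}_{N', z}}$; by Lemma~\ref{lm21}(ii), $R$ is a Noetherian local ring of dimension at least $1$. Once an appropriate minimal prime $\mathfrak{p}$ of $R$ is selected, $R/\mathfrak{p}$ is an irreducible Noetherian local domain of positive dimension, and Theorem~\ref{thm12} applied to $\Spec R/\mathfrak{p}$ with $x$ the closed point produces a finite extension $K/k_z$ and an injective local homomorphism $R/\mathfrak{p} \hookrightarrow K[[t]]$. Composing with $\mathcal{O}_{N', z} \to R \twoheadrightarrow R/\mathfrak{p}$ yields a ring map $\psi : \mathcal{O}_{N', z} \to K[[t]]$ and the corresponding arc $\phi : \Spec K[[t]] \to N'$ satisfying $\phi(0) = z$.

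Since $N'$ is irreducible (with its reduced induced structure), $\mathcal{O}_{N', z}$ is a local domain whose zero ideal corresponds to $z'$; hence $\phi(\eta) = z'$ is equivalent to $\psi$ being injective, i.e.\ to $\mathfrak{p} \cap \mathcal{O}_{N', z} = (0)$. Assuming the injectivity of $\mathcal{O}_{N', z} \hookrightarrow R$, a pigeonhole argument on the finitely many minimal primes $\mathfrak{p}_1, \ldots, \mathfrak{p}_s$ of $R$ supplies such a $\mathfrak{p}$: one has $\bigcap_i \mathfrak{p}_i = \mathrm{nil}(R)$, and $\mathrm{nil}(R) \cap \mathcal{O}_{N', z} = (0)$ because $\mathcal{O}_{N', z}$ is reduced, so $\bigcap_i (\mathfrak{p}_i \cap \mathcal{O}_{N', z}) = (0)$; since $\mathcal{O}_{N', z}$ is a domain, at least one contraction $\mathfrak{p}_i \cap \mathcal{O}_{N', z}$ must itself vanish. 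Any such $\mathfrak{p}_i$ is distinct from $\mathfrak{m}_R$ (since $\mathfrak{m}_R \cap \mathcal{O}_{N', z} = \mathfrak{m}_{\mathcal{O}_{N', z}} \neq 0$), so $\dim R/\mathfrak{p}_i \geq 1$ and the application of Theorem~\ref{thm12} is legitimate.

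The principal obstacle is thus to establish $\mathcal{O}_{N', z} \hookrightarrow \widehat{\mathcal{O}_{N', z}}$, equivalently the Krull-type intersection $\bigcap_n \mathfrak{m}^n = 0$ in the non-Noetherian local ring $\mathcal{O}_{N', z}$. Here the generic stability of $N$ is used essentially: on an open affine $W \subset X_\infty$ one has $N \cap W = \pi_n^{-1}(U_n)$ for some level $n$ and some $U_n \subset X_n$, which exhibits a neighbourhood of $z$ in $X_\infty$ as a cylinder over the Noetherian scheme $X_n$. The ring $\mathcal{O}_{N', z}$ can then be analyzed in terms of the Noetherian local ring $\mathcal{O}_{X_n, \pi_n(z)}$ together with the polynomial coordinates on the fibres of $\pi_n$; the Noetherian Krull intersection theorem applied to the former, combined with the separated polynomial structure of the latter, forces $\bigcap_n \mathfrak{m}^n = 0$. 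With this in hand the preceding steps produce the desired arc $\phi$ satisfying $\phi(0) = z$ and $\phi(\eta) = z'$.
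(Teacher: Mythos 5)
Your overall strategy coincides with the paper's: combine Lemma \ref{lm21}(ii) with the Noetherian Curve Selection Lemma (Theorem \ref{thm12}), applied to the local ring of $N'$ at $z$ with $x$ the closed point, so that the closed point of $\Spec K[[t]]$ goes to $z$ and the generic point to $z'$. The difference is bookkeeping. The paper's proof is two lines: it asserts, citing Lemma \ref{lm21}, that $\mathcal{O}_{N',z}$ itself is Noetherian, applies Theorem \ref{thm12} directly to $\Spec\mathcal{O}_{N',z}$ (an integral Noetherian scheme with closed point $z$ and generic point $z'$), and composes with $\mathcal{O}_{N'}\to\mathcal{O}_{N',z}$; the injectivity of the resulting map into $K[[t]]$ forces $\phi(\eta)=z'$. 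You instead pass to the completion $R=\widehat{\mathcal{O}_{N',z}}$ --- which is what Lemma \ref{lm21} as stated literally controls --- and are then obliged to choose a minimal prime of $R$ contracting to $(0)$ and to prove $\mathcal{O}_{N',z}\hookrightarrow R$. Your selection of the minimal prime (the product-of-nonzero-elements argument in the domain $\mathcal{O}_{N',z}$) is correct, as is the observation that $\mathfrak{p}_i\neq\mathfrak{m}_R$ gives $\dim R/\mathfrak{p}_i\geq 1$.

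The step you yourself call ``the principal obstacle,'' namely $\bigcap_n\mathfrak{m}^n=0$ in $\mathcal{O}_{N',z}$, is only sketched, and the sketch does not go through as written. Generic stability is a hypothesis on $N$, not on $N'$: it says that $N\cap W$ is a union of fibres of $\pi_n$, i.e.\ that the locally closed set $N$ is a cylinder near $z$; it does not exhibit $N'$, or any neighbourhood of $z$ in $N'$, as a cylinder over $X_n$, and the ideal of $N'$ in $\mathcal{O}_{X_\infty}$ need not be pulled back from any finite level. So the proposed description of $\mathcal{O}_{N',z}$ in terms of $\mathcal{O}_{X_n,\pi_n(z)}$ together with ``polynomial coordinates on the fibres'' is not available, and the $\mathfrak{m}$-adic separatedness of $\mathcal{O}_{N',z}$ remains unproved in your write-up. (If one reads Lemma \ref{lm21} the way the paper's proof does --- Noetherianity of $\mathcal{O}_{N',z}$ itself rather than of its completion --- then Krull's intersection theorem makes the detour through $R$ unnecessary; if one reads it literally, separatedness is a genuine point that the paper also leaves implicit, so you have at least located the real issue.) A minor further remark: Theorem \ref{thm12} produces an extension $K$ of $k_x$ with no finiteness claim, so the assertion that $K/k_z$ is finite needs an additional argument (e.g.\ a reduction to dimension one); this affects your proposal and the paper's proof equally.
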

\begin{proof}
Since $N$ is a generically stable of $X_\infty$, it follows from Lemma \ref{lm21} that the ring $\mathcal{O}_{N' ,z}$ is Noetherian. Applying the curve selection lemma for Noetherian $k_z$-schemes (Theorem \ref{thm12}), we obtain an arc defined by the following injective morphism of local $k_z$-algebras
$$ \mathcal{O}_{N' ,z} \to K[[t]],$$
$K$ is a finite algebraic extension of $k_z$. Hence the composition
$$ {\mathcal O}_{N'}\to  \mathcal{O}_{N' ,z} \to K[[t]]$$
defines an expected arc.
\end{proof}
In order to prove other strong versions of Curve Selection Lemma we need the following structure theorem, which generalizes Drinfeld-Grinberg-Kazhdan theorem \cite{GK00,Drin02,Drin20}. For its proof we need to use the proof of Drinfeld-Grinberg-Kazhdan theorem in \cite[Theorems 4.1-4.2]{BS17}.
\begin{lemma}\label{lemaa31}
Let $N$  be an  irreducible generically stable subset of  $X_\infty$. Let $\gamma\in N$ be a non-degenerate $k$-point  of $X_{\infty}$. Then there exists a local adic Noetherian $k$-algebra $A$  and an isomorphism 
$$ \widehat{\mathcal{O}}_{N,\gamma}\cong k[[\mathbb N]]\ \hat{\otimes}\ A,$$
where $ k[[\mathbb N]]$ stands for $ k[[x_1,x_2,\ldots, x_n,\ldots]]$.
\end{lemma}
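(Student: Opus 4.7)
The proof strategy is to deduce the claim from the Drinfeld--Grinberg--Kazhdan (DGK) theorem in the form proved in \cite[Theorems 4.1--4.2]{BS17}, by using the generic stability of $N$ to ensure that the defining ideal of $N$ inside $\widehat{\mathcal O}_{X_\infty,\gamma}$ sits entirely in the finite-type factor of the DGK decomposition. Since $\gamma$ is a non-degenerate $k$-arc, \cite{BS17} yields an isomorphism
$$\widehat{\mathcal O}_{X_\infty,\gamma}\;\cong\; k[[x_1,x_2,\ldots]]\ \hat\otimes\ B,$$
for some local adic Noetherian $k$-algebra $B$. A careful inspection of their iterative construction shows that $B$ can be realized as the completed local ring of a finite-level jet scheme $X_{n_0}$ at $\pi_{n_0}(\gamma)$, that the variables $x_i$ correspond to jet coordinates above level $n_0$, and that the level $n_0$ may be taken arbitrarily large at the cost of enlarging $B$.

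Next I would exploit the generic stability hypothesis. By Definition \ref{def31} there exist an open affine $W\subset X_\infty$ and an integer $n_1$ such that $N\cap W=\pi_{n_1}^{-1}(\pi_{n_1}(N\cap W))$ inside $W$. Since $N$ is irreducible and contains a non-empty weakly stable open piece, the ideal of $\overline N$ in $\mathcal O_{X_\infty}$ agrees generically --- and hence, after taking reduced structures, everywhere --- with the pullback via $\pi_{n_1}$ of the ideal of $\overline{\pi_{n_1}(N\cap W)}\subset X_{n_1}$. Choosing $n=\max(n_0,n_1)$ and applying the DGK decomposition at level $n$, the factor $B$ now contains a full set of defining functions for $\overline N$, so that the ideal $\mathcal I_{\overline N}\subset\widehat{\mathcal O}_{X_\infty,\gamma}$ takes the form $J\,\hat\otimes\, k[[x_1,x_2,\ldots]]$ for some ideal $J\subset B$. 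Setting $A:=B/J$, which remains a local adic Noetherian $k$-algebra, one obtains
$$\widehat{\mathcal O}_{N,\gamma}\;\cong\;\widehat{\mathcal O}_{X_\infty,\gamma}/\mathcal I_{\overline N}\;\cong\; A\ \hat\otimes\ k[[x_1,x_2,\ldots]],$$
as desired.

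The main obstacle I anticipate is the refinement of the DGK decomposition used in the first step: one must extract from the proof in \cite{BS17} the compatibility of their construction with the truncation maps $\pi_n:X_\infty\to X_n$, verifying both that the finite-type factor $B$ can be aligned with any prescribed jet level and that the $k[[\mathbb N]]$-factor genuinely corresponds to coordinates strictly above that level. A secondary subtlety is that the open $W$ provided by generic stability need not contain $\gamma$ a priori; to still conclude that $\mathcal I_{\overline N}$ comes from a finite-level jet scheme after completion at $\gamma$, one must pass through the generic point of $N$ using the irreducibility and reducedness of $\overline N$. Once these alignment issues are settled, the rest is essentially formal: quotienting a local adic Noetherian ring by a closed ideal yields a local adic Noetherian ring, and the tensor factorization passes cleanly through the quotient.
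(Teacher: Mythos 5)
Your proposal follows essentially the same route as the paper: invoke the Bourqui--Sebag form of the Drinfeld--Grinberg--Kazhdan decomposition, use its compatibility with the truncation maps $\pi_n$ to align the decomposition with a level $n$ at which $N$ is weakly stable, and observe that the ideal of $\overline N$ then lives entirely in the Noetherian factor, so that passing to the quotient preserves the $k[[\mathbb N]]$ tensor decomposition. The only (inessential) deviation is your claim that the Noetherian factor $B$ can be taken to be $\widehat{\mathcal O}_{X_{n_0},\pi_{n_0}(\gamma)}$ itself, which is stronger than what \cite{BS17} provides; the paper instead uses only a morphism $\phi_n\colon \widehat{(\mathbb A^d_k)_{n,0}}\times\widehat{Y_y}\to\widehat{X_{n,\gamma_n}}$ commuting with truncation and pulls the ideal of $N_n$ back through it, which suffices for the same conclusion.
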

\begin{proof}
As in the proofs of Drinfeld-Grinberg-Kazhdan theorem (see \cite{BS17,Drin02,Drin20}), we may assume that $X$ is a complete intersection, i.e. the subscheme of $\operatorname{Spec} k\left[x_1, \ldots, x_d, y_1, \ldots, y_l\right]$ defined by equations $f_1=\ldots=f_l=0$ such that the arc $\gamma_0(t)=\left(x^0(t), y^0(t)\right)$ is not contained in the subscheme of $X$ defined by det $\frac{\partial f}{\partial y}=0$. Here $\frac{\partial f}{\partial y}$ is the matrix of partial derivatives $\frac{\partial f_i}{\partial y_j}$. It follows from the proof of \cite[Theorem 4.1]{BS17}, \cite[Theorem 2.1.1]{Drin20} that there is an isomorphism 
$$\theta\colon \widehat{X_{\infty,\gamma}}\to \widehat{(\Bbb A^d_k)_{\infty,0}}\ {\times}\ \widehat{Y_y},$$
where $y$ is a $k$-point of some $k$-variety $Y$. Moreover, for each natural number $n$, there is a morphism 
$$\phi_n\colon \widehat{(\Bbb A^d_k)_{n,0}}\ {\times}\ \widehat{Y_y} \to \widehat{X_{n,\gamma_n}}$$ 
such that the following diagram commutes
\begin{displaymath}
\xymatrix{
 \widehat{N_{\gamma}}\ar@{->}[r]& \widehat{X_{\infty,\gamma}}\ar@{->}[d]_{\hat{\pi}_{n,X}}\ar@{->}[r]^{\theta}& \widehat{(\Bbb A^d_k)_{\infty,0}}\ {\times}\ \widehat{Y_y}\ar@{->}[d]^{p_{n}}\\
& \widehat{X_{n,\gamma_n}}&\widehat{(\Bbb A^d_k)_{n,0}}\ {\times}\ \widehat{Y_y}\ar@{->}[l]^{\phi_n}
}
\end{displaymath}
where $\gamma_n={\pi}_{n,X}(\gamma)$ and the vertical morphisms are induced by truncation maps. We take $n$  a positive integer such that $N\cap W$ is weakly stable at level $n$ for some open subset $W$ of $X_\infty$. Let $N_n$ be the closure of $\pi_n(N)$ in $X_n$. Since $N$ is generically stable, it follows that  $\widehat{\pi_n^{-1}(N_n)}_\gamma\cong  \widehat{N_{\gamma}}$. Then  the preimage of $\phi_n^{-1}\left(\widehat{N_{n,\gamma_n}}\right)$ is an affine formal subscheme of $\widehat{(\Bbb A^d_k)_{n,0}}\ {\times}\ \widehat{Y_y}$ and therefore 
$$\phi_n^{-1}\left(\widehat{N_{n,\gamma_n}}\right)=\Spf A,$$
for some local adic Noetherian $k$-algebra $A$. Since $p_n$ is a trivial fibration with fiber $\Spf(k[[\Bbb N]])$, it yields that
 $$ \Spf \widehat{\mathcal{O}}_{N,\gamma}\cong \hat{\pi}_{n,X}^{-1}\left( \phi_n(\Spf A)\right)= \theta^{-1}\left( p^{-1}_n(\Spf A)\right)\cong \Spf k[[\mathbb N]]\ \times \Spf A$$
and hence
$$\widehat{\mathcal{O}}_{N,\gamma}\cong k[[\mathbb N]]\ \hat{\otimes} A.$$
\end{proof}
\begin{remark}
By a more concrete argument we may indeed choose the $k$-algebra $A$ in the statement of Theorem \ref{main} such that $\Spf A$ is the completion of a $k$-variety at a $k$-point. Nevertheless, we do not need such a strong result in this paper. 
\end{remark}
\begin{theorem}\label{main}
Let $N$ be an irreducible  generically stable subset of  $X_\infty$ with the generic point $z$. Let $\gamma\in N$ be a non-degenerate $k$-point. Then there exist an extension $K$ of $k$ and an arc 
$$\phi\colon \Spec K[[t]]\to N,$$
such that $\phi(0)=\gamma$ and $\phi(\eta)=z$.
\end{theorem}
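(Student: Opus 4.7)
The plan is to combine the structure theorem (Lemma \ref{lemaa31}) with the Noetherian curve-selection lemma (Theorem \ref{thm12}): the structure theorem reduces the infinite-dimensional problem near $\gamma$ to a finite-dimensional Noetherian one, in which Theorem \ref{thm12} directly produces an arc, while the remaining infinite-dimensional formal disk factor is filled in by a sufficiently generic choice.

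Concretely, by Lemma \ref{lemaa31} we have $\widehat{\mathcal{O}}_{N,\gamma} \cong k[[\mathbb{N}]]\,\hat{\otimes}\,A$ with $A$ a local adic Noetherian $k$-algebra. Pick $n$ large enough that $N \cap W$ is weakly stable at level $n$ for some open $W \subseteq X_\infty$, and set $N_n := \overline{\pi_n(N)} \subseteq X_n$, which is an irreducible Noetherian $k$-scheme with generic point $z_n = \pi_n(z)$ and $k$-point $\gamma_n = \pi_n(\gamma)$. The proof of Lemma \ref{lemaa31} identifies $\Spf A = \phi_n^{-1}(\widehat{N_{n,\gamma_n}})$, which selects a minimal prime $\mathfrak{q}$ of $A$ corresponding to the formal branch of $\widehat{N_{n,\gamma_n}}$ lying over $z_n$. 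Apply Theorem \ref{thm12} to the irreducible Noetherian $k$-scheme $\Spec(A/\mathfrak{q})$ to obtain an injective local $k$-algebra map $A/\mathfrak{q} \hookrightarrow K_0[[t]]$, with $K_0$ an extension of $k$. Extend to a local $k$-algebra map $k[[\mathbb{N}]]\,\hat{\otimes}\,A \to L[[t]]$ by choosing $\lambda_1, \lambda_2, \ldots$ algebraically independent over $K_0$ in some extension $L \supseteq K_0$ and sending $x_i \mapsto t^i \lambda_i$. Composing with $\mathcal{O}_N \to \widehat{\mathcal{O}}_{N,\gamma}$ defines an arc $\phi\colon \Spec L[[t]] \to N$, and $\phi(0) = \gamma$ is automatic from locality.

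The main obstacle is verifying $\phi(\eta) = z$, equivalently the injectivity of the composite $\mathcal{O}_{N,\gamma} \to \widehat{\mathcal{O}}_{N,\gamma} \to L[[t]]$. The weighted assignment $x_i \mapsto t^i \lambda_i$ makes the resulting map $k[[\mathbb{N}]] \to L[[t]]$ well-defined, since each $t^n$-coefficient of the image involves only the finitely many multi-indices $I$ with weighted degree $\sum_j j\, I_j = n$; and it is injective because the monomials $\lambda^I$ are $k$-linearly independent by algebraic independence of the $\lambda_i$. Combined with the injectivity of $A/\mathfrak{q} \hookrightarrow K_0[[t]]$ furnished by Theorem \ref{thm12} and with the careful choice of $\mathfrak{q}$, this should yield injectivity of the composite. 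The most delicate point is justifying that $\mathfrak{q}$ correctly captures the generic point $z$ of $N$ rather than some specialization inside the fiber $\pi_n^{-1}(z_n) \cap N$ — a step that uses the weak stability of $N$ at level $n$, which identifies this fiber with the affine $k[[\mathbb{N}]]$-factor whose generic point is exactly $z$, and the non-degeneracy of $\gamma$, which made Lemma \ref{lemaa31} applicable in the first place.
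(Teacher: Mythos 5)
Your proposal follows essentially the same route as the paper's own proof: decompose $\widehat{\mathcal{O}}_{N,\gamma}$ via Lemma \ref{lemaa31}, apply Theorem \ref{thm12} to the Noetherian factor $A$ to get an arc $A\to K_1[[t]]$, and then embed the $k[[\mathbb N]]$ factor into a power series ring over a sufficiently large field before composing with $\mathcal{O}_N\to\widehat{\mathcal{O}}_{N,\gamma}$. The only real difference is how that last embedding is realized --- the paper maps $k[[\mathbb N]]$ into its fraction field $K_2$ and takes $K=K_1\hat{\otimes}K_2$, whereas you send $x_i\mapsto t^i\lambda_i$ with the $\lambda_i$ algebraically independent --- and your explicit attention to the choice of minimal prime $\mathfrak{q}$ of $A$ is, if anything, slightly more careful than the paper, which simply asserts the injectivity of $A\to K_1[[t]]$.
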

\begin{proof}
Since $\gamma$ is a non-degenerate $k$-arc, it follows from Lemma \ref{lemaa31} that there is an isomorphism of $k$-algebras
$$ \widehat{\mathcal{O}}_{N,\gamma}\cong k[[\mathbb N]]\ \hat{\otimes}\ A,$$
where $ k[[\mathbb N]]$ stands for $ k[[x_1,x_2,\ldots, x_n,\ldots]]$ and $A$ is a local adic Noetherian $k$-algebra. Applying Theorem \ref{thm12}, we get an arc defined by the following injective morphism of local $k$-algebras
$$A\to K_1[[t]].$$
We denote by $K_2$ the quotient field of the integral domain $k[[\mathbb N]]$ and by $K$ the completed tensor product $K_1  \hat{\otimes}\  K_2 $. Let $\phi$ be the arc defined by the following composition of injective morphisms
$$ {\mathcal O}_{N}\to   \widehat{\mathcal{O}}_{N,\gamma} \cong  k[[\mathbb N]]\ \hat{\otimes}\  A  \cong  k[[\mathbb N]]\ \hat{\otimes}\ K_1[[t]] \to K[[t]].$$
Then $\phi(0)=\gamma$ and $\phi(\eta)=z$.
\end{proof}
The following example shows that the assumption that $N$ is generically stable in Theorem \ref{thm32} and the assumption that $\gamma$ is non-degenerate in Theorem \ref{main} are necessary. 
\begin{example}[Lejeune-Jalabert and Reguera]
Let $X$ be the Whitney umbrella $x_3^2=x_1 x_2^2$ in $\mathbb{A}_{\mathbb{C}}^3$. Then $\mathrm{Sing }X$ is defined by $x_2=x_3=0$. Let $\gamma$ be the point in $X_{\infty}$ determined by any arc $x_1(t), x_2(t), x_3(t)$ such that
$$
\operatorname{ord}_t x_1(t)=1 \quad \text { and } \quad x_2(t)=x_3(t)=0 .
$$
Let $N$ be the closure of the point $\gamma$ and let $N^{\prime}$ be the set $\pi_X^{-1}(\mathrm{Sing }X) \backslash(\operatorname{Sing} X)_{\infty}$, the set of arcs centered in some point of $\operatorname{Sing} X$. Then $N \subset N^{\prime}$ (\cite[Lemma 2.12]{IK03}) but there does not exist an arc $\phi\colon \Spec K[[s]] \rightarrow N^{\prime}$ which maps the closed point to $\gamma$ and the generic point to the generic point of $N^{\prime}$. 

In fact, assume that such an arc exists, i.e. there is a wedge whose coordinates 
$$x_1(t,s), x_2(t,s), x_3(t,s) \in K[[t, s]]$$ satisfy $x_3^2=x_1 x_2^2$; 
$$x_1(t,0)=x_1(t); x_2(t,0)=x_2(t)=0\text{ and }x_3(t,0)=x_3(t)=0.$$ Then $\operatorname{ord}_{(t,s)} x_1(t,s)=1$ and thus 
$$2\operatorname{ord}_{(t,s)} x_3(t,s)=1+2 \operatorname{ord}_{(t,s)} x_2(t,s).$$
Hence $x_2(t,s)$ and $x_3(t,s)$ must be equal to zero, i.e. the image of the generic poit of $\Spec K[[s]]$ is in $(\operatorname{Sing} X)_{\infty}$, a contradiction.
\end{example}
Notice that the output of the previous result is a parametrization defined over the field $K$ which is of infinite transcendence degree over the base field. In many applications it is necessary to obtain a Curve Selection Lemma whose outcome curve is defined over the base field.
\begin{corollary}\label{lastcoro}
Let $N$ be an irreducible  generically stable subset of  $X_\infty$ with the generic point $z$. Let $P$ is another irreducible closed subset of $X_\infty$ not containing $N$. Let $\gamma\in N$ be a non-degenerate $k$-point. Then there exisst an arc 
$$\phi\colon \Spec k[[t]]\to N,$$
which maps the closed point to $\gamma$ and the generic point outside $P$.
\end{corollary}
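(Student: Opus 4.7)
The plan is to combine the structure theorem (Lemma \ref{lemaa31}) with the Curve Selection Lemma in the Noetherian setting (Corollary \ref{coro22}). In contrast to the proof of Theorem \ref{main}, where Theorem \ref{thm12} is invoked and the base field is necessarily enlarged, here we invoke Corollary \ref{coro22}, which simultaneously lets us prescribe a strict closed subset to avoid and keeps the resulting arc defined over $k$.

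If $\gamma \notin P$, the constant arc at $\gamma$ already works, so we may assume $\gamma \in P \cap N$. By Lemma \ref{lemaa31}, $B := \widehat{\mathcal{O}}_{N,\gamma} \cong k[[\mathbb{N}]] \,\hat{\otimes}\, A$ for some local adic Noetherian $k$-algebra $A$ with residue field $k$, and under this identification $B \cong \varprojlim_n A[[x_1,\ldots,x_n]]$. Every local $k$-algebra homomorphism $B \to k[[t]]$ factors through the reduction $B_{\mathrm{red}}$, so after replacing $A$ by $A_{\mathrm{red}}$ we may assume that $A$, each truncation $B_n := A[[x_1,\ldots,x_n]]$, and $B$ itself are reduced. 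Let $\hat{I} \subset B$ denote the ideal cutting out the formal completion of $P \cap N$ at $\gamma$. Since $N$ is irreducible and $N \not\subset P$, one can exhibit a nonzero $\hat{g} \in \hat{I}$ by taking the image of any nonzero $g \in I(P \cap N) \subset \mathcal{O}_N$: indeed, $\mathcal{O}_N$ is a domain and Lemma \ref{lemaa31} guarantees that the completion map $\mathcal{O}_{N,\gamma} \hookrightarrow B$ is injective, so $\hat{g} \neq 0$ in $B$.

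Choose $n$ large enough that the image $\hat{g}_n$ of $\hat{g}$ in $B_n$ is nonzero. Since $B_n$ is reduced, $\hat{g}_n$ is not nilpotent, and consequently some coefficient $c_\alpha \in A$ in its power series expansion lies outside some minimal prime $\mathfrak{p}$ of $A$. The ring $R := (A/\mathfrak{p})[[x_1,\ldots,x_n]]$ is then a Noetherian local integral domain of dimension at least $n \geq 1$ in which the image of $\hat{g}_n$ remains nonzero. Applying Corollary \ref{coro22} to $\Spec R$, its closed $k$-point, and the strict closed subset $V(\hat{g}_n)$, we obtain an arc $\beta\colon \Spec k[[t]] \to \Spec R$ whose closed point maps to the closed point and whose generic point lies outside $V(\hat{g}_n)$. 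Composing the surjections $B \twoheadrightarrow B_n \twoheadrightarrow R$ with $\beta^*\colon R \to k[[t]]$ produces a local $k$-algebra map $B \to k[[t]]$ not killing $\hat{g}$; the induced arc $\phi\colon \Spec k[[t]] \to N$ satisfies $\phi(0) = \gamma$, and its generic point avoids $V(g) \cap N \supseteq P \cap N$, hence avoids $P$.

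The crux of the argument is the extraction, from $\hat{I} \neq 0$, of a non-nilpotent element whose image in some Noetherian truncation $B_n$ is also non-nilpotent. The reduction to $B_{\mathrm{red}}$ is essential, since nilpotent elements of $\hat{I}$ would be invisible to every arc into $k[[t]]$; once that is done, the irreducibility and reducedness of $N$, combined with Lemma \ref{lemaa31}, furnish the required non-nilpotent witness.
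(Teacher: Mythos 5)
Your proposal is correct and follows exactly the route the paper intends: the paper's proof of Corollary \ref{lastcoro} is only the one-line instruction to repeat the proof of Theorem \ref{main} using the cutting method of Corollary \ref{coro22}, and your argument is a careful fleshing-out of precisely that plan (reduce via Lemma \ref{lemaa31} to $B\cong k[[\mathbb N]]\,\hat\otimes\,A$, push a nonzero equation of $P\cap N$ into a Noetherian reduced truncation $R=(A/\mathfrak p)[[x_1,\dots,x_n]]$, and cut there). The one point to flag is that you attribute the injectivity of $\mathcal{O}_{N,\gamma}\to\widehat{\mathcal{O}}_{N,\gamma}$ to Lemma \ref{lemaa31}, which does not assert it; this injectivity (equivalently $\bigcap_n\mathfrak m^n=0$ in the non-Noetherian local ring $\mathcal{O}_{N,\gamma}$) is also used without justification in the paper's own proof of Theorem \ref{main}, so it is a shared, not a new, gap.
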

\begin{proof}
It is proved in the same way as in the proof of Theorem \ref{main} by using a cutting method (cf. the proof of Corollary \ref{coro22}).
\end{proof}


\begin{thebibliography}{12}

 \bibitem{BS17} D. Bourqui, J. Sebag {\em The Drinfeld-Grinberg-Kazhdan theorem for formal schemes and singularity theory}. Confluentes Math. 9 (2017), no. 1, 29--64.


\bibitem{dFD16} T. de Fernex, R. Docampo,  {\em Terminal valuations and the Nash problem}. Invent. math. 203 (2016), 303--331.

\bibitem{JP12a} J. F. de Bobadilla,  M. Pe Pereira, {\em Nash problem for surface singularities is a topological problem.}  Adv. Math. 230 (2012), no. 1, 131--176.

\bibitem{JP12} J. F. de Bobadilla, M. Pe Pereira, {\em The Nash problem for surfaces}, Ann. of Math. (2) 176 (2012), no. 3, 2003--2029.

\bibitem{DL99} {J. Denef, F. Loeser}, {\it Germs of arcs on singular algebraic varieties and motivic integration}. {Invent. Math.} {  135} (1999), 201-232.

\bibitem{Drin02}  V. Drinfeld, {\it On the Grinberg–Kazhdan formal arc theorem}, Preprint (2002), math.AG/0203263.

\bibitem{Drin20}  V. Drinfeld, {\it The Grinberg-Kazhdan formal arc theorem and the Newton groupoids},  37–56, World Sci. Publ., Hackensack, NJ, [2020], ©2020.


\bibitem{ELM} {L. Ein, R. Lazarsfeld, M. Musta\c{t}\u{a}}, {\it Contact loci in arc spaces}. {Compos.  Math.} {  140} (2004), 1229-1244.

\bibitem{GK00}  M. Grinberg, D. Kazhdan, {\it Versal deformations of formal arcs}, Geom. Funct. Anal. 10 (2000),
543--555.





\bibitem{IK03} S. Ishii and J. Koll\'ar, {\em The Nash problem on arc families of singularities,} Duke Math. J. 120 (2003), 601--620.


\bibitem{Ishi08} S. Ishii, {\em Maximal divisorial sets in arc spaces. Algebraic geometry in East Asia-Hanoi 2005, 237-249, Adv. Stud. Pure Math., 50, Math. Soc. Japan, Tokyo, 2008.}

\bibitem{Lej80} M. Lejeune-Jalabert. Arcs analytiques et r\'esolution minimale des singularit\'es des surfaces
quasi-homog\`enes Springer LNM 777, 303-336, (1980).

\bibitem{Lej99} M. Lejeune-Jalabert, A. Reguera-L\'opez. Arcs and wedges on sandwiched surface singularities,
Amer. J. Math. 121, (1999) 1191-1213.

\bibitem{LR12} M. Lejeune-Jalabert, A. Reguera-L\'opez. Exceptional divisors which are not uniruled belong
to the image of the Nash map. J. Inst. Math. Jussieu 11 (2012), no. 2, 273--287.

\bibitem{Mi68} J. Milnor, {\it Singular points of complex hypersurfaces}. Princeton Univ. Press (1968), iii+122 pp.


\bibitem{P}   M. Pe Pereira. {\em Nash problem for quotient surface singularities.} J. Lond. Math. Soc. (2) 87 (2013), no. 1, 177--203.

\bibitem{Reg06}
{A. J. Reguera}, {\it A curve selection lemma in spaces of arcs and the image of the Nash map}, {Compos.  Math.} {\bf 142} (2006) 11--130.
 
\end{thebibliography}
\end{document}